\newtheorem{theorem}{Theorem}
\newtheorem{corollary}{Corollary}[theorem]
\newtheorem{lemma}{Lemma}[theorem]
\title{The infinite Fibonacci cube and its generalizations}
\author{Hiep Trinh and Trevor M.\ Wilson }
\affil{Miami University}
\date{December 8, 2023}
\begin{document}

\maketitle

Please do not cite this version; it is a preliminary and very incomplete draft.

\begin{abstract}
The Fibonacci cube $\Gamma_n$ is is the graph whose vertices are independent subsets of the path graph of length $n$, where two such vertices are considered adjacent if they differ by the addition or removal of a single element. Klavžar \cite{Kla13} suggested considering the infinite Fibonacci cube $\Gamma_\infty$ whose vertices are independent subsets of the one-way infinite path graph with the same adjacency condition.
We show that every connected component of $\Gamma_\infty$
is asymmetric (has no nontrivial automorphism) and no two connected components of $\Gamma_\infty$ are isomorphic.
This follows from our results on a further generalization $\Gamma_G$ where $G$ is a simple, locally finite hypergraph with no isolated vertices.
\end{abstract}

\section{Definitions}

By a \emph{hypergraph} we will always mean a finitary hypergraph, which is a structure $G = (V,E)$ where each edge $e \in E$ is a finite subset of the vertex set $V$.
A hypergraph $G$ is \emph{simple} if every edge contains more than one vertex and no edge is a subset of any other edge.
A hypergraph $G$ is \emph{locally finite} if every vertex is contained in only finitely many edges.
Two vertices $a$ and $b$ are \emph{neighbors} if there is an edge containing both of them.
A set of vertices $A$ is a \emph{tail} of a vertex $b$ if $b \not \in A$ and $A \cup \{b\}$ is an edge.
A vertex is \emph{isolated} if it is not contained in any edge.
A set $s$ of vertices is \emph{independent} if no edge is a subset of $s$.
For two sets $s$ and $t$, their symmetric difference is denoted by $s\oplus t$.

For a hypergraph $G$, let $\Gamma_G$ be the graph whose vertices are the independent subsets of $G$, where two vertices $s$ and $t$ are adjacent iff they differ by the addition or removal of a single vertex: $\lvert s \oplus t \rvert = 1$.
For every vertex $s$ of $\Gamma_G$, let $\Gamma_G(s)$ denote the connected component of $\Gamma_G$ containing $s$.  Note that $\Gamma_G(s)$ consists of all the elements of $\Gamma_G$ (independent subsets of $G$) whose symmetric difference with $s$ is finite.

\section{Results}

\begin{theorem}
Let $S$ and $T$ be simple, locally finite hypergraphs,
and suppose there is an isomorphism $g$ from some connected component $\Gamma_S(s)$ of $\Gamma_S$ to some connected component $\Gamma_T(t)$ of $\Gamma_T$. Then,
\begin{enumerate}
\item $S$ is isomorphic to $T$, and
\item If $S$ and $T$ have no isolated vertices, there is an isomorphism $f : S \to T$ such that $g(r) = f[r]$ for every vertex $r$ of $\Gamma_S(s)$.
\end{enumerate}
\end{theorem}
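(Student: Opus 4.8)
The plan is to recover each hypergraph from its component by viewing $\Gamma_S(s)$ as a partial cube whose coordinate directions are exactly the vertices of $S$. First I would check that $r \mapsto r$ embeds $\Gamma_S(s)$ isometrically into the hypercube $\{0,1\}^{V_S}$ of all subsets with the Hamming metric: any path changes $\lvert x \oplus r' \rvert$ by $\pm 1$ at each step, so the graph distance is at least $\lvert r \oplus r' \rvert$, and the ``remove then add'' path (first delete the elements of $r \setminus r'$, then insert those of $r' \setminus r$) realizes this length because every intermediate set is a subset of $r$ or of $r'$, hence independent and in the component. Labelling each edge by the unique vertex $v$ it toggles, the Djoković--Winkler classes of this partial cube are exactly the coordinate labels, so the set of labels is intrinsic to the graph. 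Every $v \in V_S$ occurs as a label: given any vertex, delete one element from each of the finitely many tails of $v$ it contains (local finiteness) to reach a vertex at which $v$ can be toggled. Since $g$ is an isometry it carries $\Theta$-classes to $\Theta$-classes and thus induces a bijection $f : V_S \to V_T$.

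Next I would prove a realization lemma: every finite independent set $A$ is contained in some vertex of the given component. Starting from any vertex $r_0$, form $r_0 \cup A$; each edge contained in $r_0 \cup A$ must meet $r_0 \setminus A$ (as $A$ is independent), and by local finiteness there are only finitely many such edges, so deleting one $r_0$-element from each yields an independent set that still contains $A$ and differs finitely from $r_0$. Using this I would orient each coordinate. The class $F_v$ separates the component into the sides $\{r : v \in r\}$ and $\{r : v \notin r\}$. If $v$ lies in an edge $e$, then realizing $A = e \setminus \{v\}$ (and then deleting $v$ if present) produces a vertex on the side $\{r : v \notin r\}$ at which $v$ cannot be toggled, whereas every vertex with $v \in r$ can always toggle $v$ off; hence for non-isolated $v$ exactly the side $W^{in}_v = \{r : v \in r\}$ is incident to all of $F_v$, while for isolated $v$ both sides are fully incident. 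This yields intrinsic criteria: $v$ is isolated iff $F_v$ saturates every vertex, and for non-isolated $v$ the side $W^{in}_v$ is graph-theoretically distinguished.

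Because $g$ preserves all of this intrinsic data, $f$ maps isolated coordinates to isolated coordinates and, for non-isolated $v$, sends $W^{in}_v$ to $W^{in}_{f(v)}$. Since $S$ is simple, its edges are precisely the minimal dependent sets, and these consist only of non-isolated vertices; by the realization lemma a finite set $D$ of non-isolated coordinates is dependent iff $\bigcap_{v \in D} W^{in}_v = \emptyset$. As $g$ carries these intersections to the corresponding intersections in $T$, it preserves dependence and minimality, so $f$ and $f^{-1}$ both send edges to edges. Thus $f : S \to T$ is an isomorphism, which proves (1) regardless of isolated vertices (the isolated coordinates biject freely under $f$ and belong to no edge). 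For (2), the hypothesis of no isolated vertices means every coordinate is oriented, so $g(\{r : v \in r\}) = \{q : f(v) \in q\}$, i.e.\ $v \in r \iff f(v) \in g(r)$, which is exactly the statement $g(r) = f[r]$.

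The hard part will be the orientation step together with the realization lemma: I must show that the two sides of $F_v$ are genuinely distinguishable in graph terms precisely when $v$ is non-isolated, and that all the existence arguments go through inside a single component even when $s$ is infinite, so that small sets such as $\emptyset$ or $A$ itself need not be vertices. I expect the partial-cube bookkeeping (that the $\Theta$-classes coincide with coordinate directions for infinite partial cubes) and this orientation argument to require the real care; the reconstruction of the edges is then essentially formal.
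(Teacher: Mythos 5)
Your argument is correct, and it reaches the same destination by a recognizably different route. Where the paper defines $f(v)$ from a single toggleable pair $(x_s(v),y_s(v))$ and then proves direction-preservation by an inductive ladder of $4$-cycles, you get the same fact from the Djokovi\'c--Winkler machinery: the component is an isometric subgraph of a hypercube, the $\Theta$-classes are exactly the coordinate classes (a short distance computation that goes through verbatim in the infinite setting, since all relevant distances are finite), and any graph isomorphism preserves distances and hence $\Theta$. The more substantive divergence is in how the two proofs handle the ``offset'': the paper first proves $g(r)=f[r]\oplus c$ with $c=f[s]\oplus t$ and then, assuming no isolated vertices, derives a contradiction from $c\neq\emptyset$ by exhibiting a vertex $r$ with $e\subseteq r\oplus b$; you instead orient each class $F_v$ by observing that the halfspace $\{r: v\in r\}$ is always $F_v$-saturated while the other side is not when $v$ is non-isolated, so $v\in r\iff f(v)\in g(r)$ and no constant $c$ ever appears. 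Note that the key witnesses are identical in both proofs --- a vertex containing a tail of $v$ but not $v$ is exactly what the paper builds as $s'\cup e\setminus b$ --- and your realization lemma is the same construction the paper uses inside its ``$f$ is an isomorphism'' lemma. Your reconstruction of the edge set as the minimal finite sets $D$ with $\bigcap_{v\in D}W^{in}_v=\emptyset$ is cleaner than the paper's computation with $f[a]\cup(c\oplus f[r])$, and it cleanly isolates why isolated vertices are harmless for part~1. The steps you flag as needing care (that $\Theta$-classes coincide with coordinate directions for these infinite partial cubes, and that the orientation criterion is genuinely graph-intrinsic) do check out, so I see no gap; the trade-off is that your proof leans on partial-cube theory that the paper's self-contained $4$-cycle induction avoids.
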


In other words, part 2 of the theorem says that if $S$ and $T$ are simple, locally finite hypergraphs with no isolated vertices, then the only isomorphisms between connected components of the generalized Fibonacci cubes $\Gamma_S$ and $\Gamma_T$ are the ones induced by isomorphisms of the underlying hypergraphs $S$ and $T$.

We first prove part 1 of the theorem.
By redefining $t$ if necessary, we may assume without loss of generality that $f[s] = t$.

For every vertex $v$ of $S$, let $x_s(v)$ be the set of vertices  obtained by removing (if present) $v$ and every neighbor of $v$ from $s$. Let $y_s(v) = x_s(v) \cup \lbrace v \rbrace$. 
Both $x_s(v)$ and $y_s(v)$ are independent subsets of $S$ whose symmetric difference with $s$ is finite (since $S$ is locally finite,) and therefore both $x_s(v)$ and $y_s(v)$ are vertices of $\Gamma_S(s)$.
As $x_s(v)$ and $y_s(v)$ are adjacent in $\Gamma_S(s)$, it follows that $g(x_s(v))$ and $g(y_s(v))$ are adjacent in $\Gamma_T(t)$,
so their symmetric difference has only a single element which we may call $f(v)$.  In other words, $g(x_s(v)) \oplus g(y_s(v)) = \{f(v)\}$.

The following lemma shows that the definition of this function $f$ is not as particular as it might appear:

\begin{lemma}
For all $a,b \in \Gamma_S(s)$, if $a \oplus b = \lbrace v \rbrace$, then $g(a)\oplus g(b) = \lbrace f(v) \rbrace$.
\end{lemma}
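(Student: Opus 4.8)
The plan is to show that, for each vertex $v$ of $S$, the isomorphism $g$ sends \emph{every} edge of $\Gamma_S(s)$ whose endpoints differ by $\{v\}$ to an edge of $\Gamma_T(t)$ whose endpoints differ by one and the same vertex of $T$. Since $f(v)$ is by definition the vertex of $T$ realized by the particular edge $\{x_s(v), y_s(v)\}$, that common vertex must be $f(v)$, which gives the lemma at once. Call an edge of $\Gamma_S$ a \emph{$v$-edge} if its endpoints differ by $\{v\}$; equivalently, it joins some independent set $c$ with $v \notin c$ to $c \cup \{v\}$, where $c$ contains no tail of $v$. Write $E_v(c)$ for this edge, and say that such a $c$ \emph{admits a $v$-toggle}.

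First I would record the shape of $4$-cycles in an arbitrary $\Gamma_G$. If $p_0 p_1 p_2 p_3$ is a $4$-cycle, then the four singleton differences have symmetric difference $\emptyset$, and a short case analysis shows that the only possibility with four distinct vertices is that opposite edges toggle the same vertex: there are distinct vertices $u \ne w$ with consecutive steps toggling $u, w, u, w$. Thus every $4$-cycle is a \emph{square} obtained by toggling two fixed vertices, and within it the opposite edges are exactly the two pairs toggling a common vertex. Because $g$ is a graph isomorphism, it carries $4$-cycles to $4$-cycles and opposite edges to opposite edges; applying the same description in $\Gamma_T$, whenever two $v$-edges are opposite edges of one square, their $g$-images toggle a common vertex of $T$.

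Next I would show that any two $v$-edges of $\Gamma_S(s)$ are linked by a chain of squares, consecutive edges being opposite in a single square, so that the toggled vertex of $T$ stays constant along the chain. The key point is that $E_v(c)$ and $E_v(c')$ are opposite edges of a square precisely when $c \oplus c' = \{w\}$ for a single $w \ne v$ with both $c$ and $c'$ admitting a $v$-toggle. So it suffices to connect any two such $c, c'$ by single-element changes, none equal to $v$, through sets that remain independent and still admit a $v$-toggle. I would do this by descending from $c$ to $c \cap c'$, deleting the elements of $c \setminus c'$ one at a time, and then ascending to $c'$, adjoining the elements of $c' \setminus c$ one at a time. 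Every intermediate set is a subset of $c$ or of $c'$, hence independent and (since a subset of a set containing no tail of $v$ again contains no tail of $v$) still admitting a $v$-toggle; and $v$ is never toggled since $v \notin c, c'$. Chaining the resulting squares and feeding them through the previous paragraph shows that all $v$-edges map to $T$-edges toggling one common vertex.

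Finally, since $x_s(v)$ admits a $v$-toggle and $E_v(x_s(v)) = \{x_s(v), y_s(v)\}$ is mapped by $g$ to an edge toggling $f(v)$, that common vertex is exactly $f(v)$; so for $a, b \in \Gamma_S(s)$ with $a \oplus b = \{v\}$, the edge $\{a,b\}$ is a $v$-edge and $g(a) \oplus g(b) = \{f(v)\}$. I expect the main obstacle to be the connectivity step: one must ensure the one-element path from $c$ to $c'$ avoids toggling $v$ while keeping every intermediate set simultaneously independent and able to admit a $v$-toggle, which is what forces the detour through $c \cap c'$ rather than a direct interpolation.
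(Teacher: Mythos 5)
Your proposal is correct and follows essentially the same route as the paper: both arguments connect the reference edge $\{x_s(v),y_s(v)\}$ to $\{a,b\}$ by a ladder of $4$-cycles built from a ``delete down to the intersection, then add up'' path whose intermediate sets are subsets of the endpoints (hence independent and free of tails of $v$), and both propagate the toggled vertex $f(v)$ across each square using the fact that opposite edges of a $4$-cycle in these graphs toggle the same vertex. Your version merely makes the $4$-cycle structure lemma and the ``all $v$-edges are chained by squares'' framing explicit, which the paper leaves implicit.
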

\begin{proof}
Without loss of generality, assume that $b = a \cup \{v\}$. 
Let $x_0,x_1,x_2,...,x_{n+1}$ be a path in $\Gamma_S$ from the vertex $x_0 = x_s(v)$ to the vertex $x_{n+1} = a$
obtained by first removing elements of $x_s(v) \setminus a$ one by one (in some arbitrary order) and then adding elements of $a \setminus x_s(v)$ one by one (in some arbitrary order.)
Note that because neither of $x_s(v)$ and $a$ contains the vertex $v$, no $x_i$ can contain the vertex $v$.
Similarly because neither of $x_s(v)$ and $a$ contains a tail of $v$ (the latter because $a \cup \{v\} = b$ and $b$ is independent), no $x_i$ can contain a tail of $v$.
Therefore defining $y_i = x_i \cup \lbrace v \rbrace$, each $y_i$ is an independent set, and we obtain the following subgraph of $\Gamma_S(s)$.

\begin{equation*}
\begin{tikzcd}
y_s(v) = &y_0 \arrow[r, no head] & y_1 \arrow[r, no head, dotted] & y_n \arrow[r, no head] &y_{n+1} &= b \\
x_s(v) = &x_0 \arrow[u, no head] \arrow[r, no head] & x_1 \arrow[u, no head] \arrow[r, no head, dotted] & x_n \arrow[u, no head] \arrow[r, no head] & x_{n+1}  \arrow[u, no head] &=a
\end{tikzcd}
\end{equation*}

It remains to prove by induction that 
$g(x_i)\oplus g(y_i) = \lbrace f(v) \rbrace$ for all $i \le n+1$.
The case $i = 0$ follows from the definitions of $x_0$, $y_0$, and $f$.
Now assume that $i \le n$ and $g(x_i)\oplus g(y_i) = \lbrace f(v) \rbrace$.
Since $g$ is an isomorphism, it maps 4-cycles in $\Gamma_S(s)$ to 4-cycles in $\Gamma_T(t)$.
Consider the following 4-cycle in $\Gamma_T(t)$:
\begin{equation*}
\begin{tikzcd}
g(y_i) \arrow[r, no head]                   & g(y_{i+1})                    \\
g(x_i) \arrow[u, no head] \arrow[r, no head] & g(x_{i+1}) \arrow[u, no head]
\end{tikzcd}
\end{equation*}
Since $g(x_i)\oplus g(y_i) = \lbrace f(v) \rbrace$
neither $g(y_i) \oplus g(y_{i+1})$ nor $g(x_i) \oplus g(x_{i+1})$ can be $\lbrace f(v) \rbrace$, and it follows that $g(y_i) \oplus g(y_{i+1})$ and $g(i_n) \oplus g(x_{i+1})$ must be the same, and that $g(x_{i+1}) \oplus g(y_{i+1}) = \{f(v)\}$, as desired.  The case $i = n+1$ proves the lemma.
\end{proof}

\begin{lemma}
    $f$ is bijective.
\end{lemma}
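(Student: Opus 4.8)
The plan is to exploit the complete symmetry between $S$ and $T$ by running the entire construction backwards along $g^{-1}$. Since $g$ is an isomorphism from $\Gamma_S(s)$ to $\Gamma_T(t)$, its inverse $g^{-1}$ is an isomorphism from $\Gamma_T(t)$ to $\Gamma_S(s)$, and $T$ is just as simple and locally finite as $S$. Thus I would apply verbatim the construction that produced $f$, but with the roles of $S$ and $T$ interchanged and $g$ replaced by $g^{-1}$: for each vertex $w$ of $T$ let $x_t(w)$ be obtained by removing $w$ and all its neighbors from $t$, let $y_t(w) = x_t(w) \cup \{w\}$, and let $f'(w)$ be the unique element of $g^{-1}(x_t(w)) \oplus g^{-1}(y_t(w))$. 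Local finiteness of $T$ guarantees that $x_t(w)$ and $y_t(w)$ are vertices of $\Gamma_T(t)$, so $f'$ is a well-defined function from the vertices of $T$ to the vertices of $S$. The preceding lemma applies equally to $g^{-1}$ and $f'$: for all $a,b \in \Gamma_T(t)$ with $a \oplus b = \{w\}$, we have $g^{-1}(a) \oplus g^{-1}(b) = \{f'(w)\}$.

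With both $f$ and $f'$ in hand, I would show they are mutually inverse. Fix a vertex $v$ of $S$. By construction $x_s(v) \oplus y_s(v) = \{v\}$, so the preceding lemma applied to $g$ gives $g(x_s(v)) \oplus g(y_s(v)) = \{f(v)\}$. Now $g(x_s(v))$ and $g(y_s(v))$ are vertices of $\Gamma_T(t)$ whose symmetric difference is the singleton $\{f(v)\}$, so the lemma applied to $g^{-1}$ gives $g^{-1}(g(x_s(v))) \oplus g^{-1}(g(y_s(v))) = \{f'(f(v))\}$. The left-hand side is simply $x_s(v) \oplus y_s(v) = \{v\}$, whence $f'(f(v)) = v$. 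The symmetric argument, starting from a vertex $w$ of $T$ and using $x_t(w) \oplus y_t(w) = \{w\}$, yields $f(f'(w)) = w$. Hence $f' \circ f$ and $f \circ f'$ are the identity maps on the vertex sets of $S$ and $T$ respectively, so $f$ is a bijection with inverse $f'$.

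The one point requiring care is the claim that the preceding lemma transfers to $g^{-1}$ and $f'$. This is not an additional computation but an observation about the logical form of that lemma: its statement and proof invoked only the hypotheses that the two hypergraphs are simple and locally finite and that $g$ is an isomorphism between connected components of their generalized Fibonacci cubes, all of which are symmetric in $S$ and $T$ and stable under passing to $g^{-1}$. Once this symmetry is made explicit, both the well-definedness of $f'$ and the two singleton identities above follow immediately, and no case analysis or further appeal to the internal structure of $S$ or $T$ is needed. I expect this transfer step to be the only place where one must argue rather than merely calculate; everything after it is a two-line verification that $f$ and $f'$ compose to the identity.
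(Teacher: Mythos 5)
Your proof is correct, and it takes a route that differs from the paper's in a worthwhile way. The paper proves injectivity and surjectivity separately: for injectivity it considers the set $x_s(v,v') = x_s(v) \cap x_s(v')$, applies Lemma 1.1 to get $g(x_s(v,v') \cup \{v\}) = g(x_s(v,v') \cup \{v'\})$ when $f(v)=f(v')$, and concludes $v=v'$ from the injectivity of $g$; for surjectivity it pulls $x_t(w)$ and $y_t(w)$ back along $g^{-1}$ to an adjacent pair $a,b$ with $a \oplus b = \{v\}$ and applies Lemma 1.1 to get $f(v)=w$. Your construction of the mirror function $f'$ from $g^{-1}$ packages both halves into the single statement that $f$ and $f'$ are mutually inverse: your verification of $f(f'(w))=w$ is essentially the paper's surjectivity argument with the pulled-back vertex $v$ named explicitly as $f'(w)$, while your verification of $f'(f(v))=v$ replaces the paper's separate injectivity computation. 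The transfer of Lemma 1.1 to $g^{-1}$ and $f'$ that you flag as the one delicate point is indeed legitimate — all hypotheses (simplicity, local finiteness, $g$ being an isomorphism of components with $g(s)=t$) are symmetric under swapping $S$ with $T$ and $g$ with $g^{-1}$, and the paper itself invokes exactly this kind of symmetry later when proving that $f$ is a hypergraph isomorphism. What your approach buys is an explicit formula for $f^{-1}$ (it is the function canonically induced by $g^{-1}$), which makes the later ``swap $S$ and $T$ and replace $f$, $g$ by their inverses'' step of the paper precise; what it costs is the need to state the symmetric form of Lemma 1.1 explicitly, which the paper's direct injectivity argument avoids.
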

\begin{proof}
First we show $f$ is injective.
Let $v$ and $v'$ be vertices of $S$ and suppose that $f(v) = f(v')$.
Define $x_s(v,v') = x_s(v) \cap x_s(v')$, which is the independent subset of $S$ obtained by removing $v$ and $v'$ and their (finitely many) neighbors from $s$.  Note that  $x_s(v,v')$ is a vertex of $\Gamma_S(s)$.
Then 
\[g(x_s(v,v') \cup \lbrace v \rbrace) = g(x_s(v,v')) \oplus f(v) = g(x_s(v,v') \oplus f(v') = g(x_s(a,b) \cup \lbrace v' \rbrace).\]
Since $g$ is injective, it follows that $v = v'$.

Now we show $f$ is surjective.
Let $w$ be a vertex of $T$.
Since $g$ is surjective, we have $x_t(w) = g(a)$ and $y_t(w) = g(b)$ for some vertices $a$ and $b$ of $\Gamma_S(s)$. Since $x_t(w)$ and $y_t(w)$ are adjacent and $g$ is an isomorphism, $a$ and $b$ are adjacent, so $a \oplus b = \{v\}$ for some vertex $v$ of $S$. Then by a previous lemma,
\[\{f(v)\} = g(a) \oplus g(b) = x_t(w) \oplus y_t(w) = \{w\},\] so $f(v) = w$.
\end{proof}

\begin{lemma}
For all $r \in \Gamma_S(s)$, we have $g(r) = f[r] \oplus c$, where $c = f[s] \oplus t$.

\end{lemma}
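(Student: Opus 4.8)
The plan is to induct on the finite quantity $n = \lvert r \oplus s \rvert$, using the first lemma (which reads off $g(a)\oplus g(b)$ from $a\oplus b$) together with the injectivity of $f$ to carry out the symmetric-difference bookkeeping. Throughout I work under the normalization $g(s) = t$, which is harmless because $\Gamma_T(g(s)) = \Gamma_T(t)$; this is the only reading of the normalization recorded at the start of the section under which the stated value $c = f[s]\oplus t$ can be correct, since the case $r = s$ forces $g(s) = f[s]\oplus c$. Indeed, the base case $n = 0$ is then immediate: with $r = s$,
\[
f[s] \oplus c = f[s] \oplus \big(f[s] \oplus t\big) = t = g(s).
\]

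For the inductive step I assume $n \ge 1$ and that the claim holds for every vertex of $\Gamma_S(s)$ whose symmetric difference with $s$ is smaller than $n$. Given $r$ with $\lvert r \oplus s\rvert = n$, I first produce a neighbor $r'$ that is strictly closer to $s$ and is \emph{genuinely} an independent set. This is the one point requiring care, since adding a vertex to an independent set need not preserve independence, so I do not toggle an arbitrary element of $r\oplus s$. Instead: if $r\setminus s \neq \emptyset$, I pick $v \in r\setminus s$ and set $r' = r\setminus\{v\}$, which is a subset of $r$ and hence independent; otherwise $r \subsetneq s$, and I pick $v \in s\setminus r$ and set $r' = r\cup\{v\}$, which is a subset of $s$ and hence independent. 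In either case $r\oplus r' = \{v\}$ and $\lvert r'\oplus s\rvert = n-1$, so the inductive hypothesis applies and $g(r') = f[r']\oplus c$.

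It remains to combine the pieces. Applying the first lemma to $r\oplus r' = \{v\}$ gives $g(r)\oplus g(r') = \{f(v)\}$, and therefore
\[
g(r) = g(r') \oplus \{f(v)\} = f[r'] \oplus c \oplus \{f(v)\}.
\]
So the lemma reduces to the set identity $f[r] = f[r']\oplus\{f(v)\}$, and this is exactly where injectivity of $f$ enters: when $r = r'\cup\{v\}$ with $v\notin r'$, injectivity gives $f(v)\notin f[r']$, so $f[r] = f[r']\cup\{f(v)\} = f[r']\oplus\{f(v)\}$; when $r = r'\setminus\{v\}$, injectivity gives $f[r'\setminus\{v\}] = f[r']\setminus\{f(v)\}$ with $f(v)\in f[r']$, so again $f[r] = f[r']\oplus\{f(v)\}$. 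Substituting yields $g(r) = f[r]\oplus c$, closing the induction. The main obstacle is the one flagged above — ensuring the closer neighbor $r'$ is independent — and it is dissolved by always deleting a vertex of $r$ (staying inside $r$) or inserting a vertex of $s$ (staying inside $s$); everything else is routine symmetric-difference algebra resting on the injectivity of $f$.
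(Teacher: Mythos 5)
Your proof is correct and follows essentially the same route as the paper: the paper walks an explicit path from $s$ to $r$ (first deleting the elements of $s\setminus r$, then adding the elements of $r\setminus s$) and applies the first lemma at each edge, which is exactly your induction on $\lvert r\oplus s\rvert$ unrolled. Your attention to the independence of the intermediate sets and your use of the injectivity of $f$ in the identity $f[r]=f[r']\oplus\{f(v)\}$ simply make explicit two details the paper leaves implicit.
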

\begin{proof}
Note that the claim is true in the case $r = s$ because $g(s) = t$.
Consider a path from $s$ to $r$ 
in $\Gamma_S(s)$
obtained 
by first removing the elements of $s \setminus r$ one by one (in some arbitrary order) and then adding elements of $r \setminus s$ one by one (in some arbitrary order.) 
Since $s$ and $r$ are independent subsets of $S$, each set in this path is an independent subset of $S$,
and since any two consecutive sets in this path differ by a single element, this path is indeed a path in 
 $\Gamma_S(s)$.
    This lemma is proved by applying a previous lemma repeatedly on two consecutive independent sets on the path.
\end{proof}


\begin{lemma}
$f$ is an isomorphism from $S$ to $T$.
\end{lemma}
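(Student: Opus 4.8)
The plan is to show that the bijection $f$ (established in the previous lemma) carries edges of $S$ onto edges of $T$ and, symmetrically, that $f^{-1}$ carries edges of $T$ onto edges of $S$; together with bijectivity this is exactly the assertion that $f$ is a hypergraph isomorphism. A preliminary observation I would record is that the whole setup is symmetric in $S$ and $T$: running the construction of $f$ with the inverse isomorphism $g^{-1}$ in place of $g$ produces $f^{-1}$. This follows from the key lemma, since if $a' \oplus b' = \{w\}$ in $\Gamma_T(t)$ then $g^{-1}(a')$ and $g^{-1}(b')$ are adjacent and differ by a single vertex $v$ with $\{f(v)\} = a' \oplus b' = \{w\}$, forcing $v = f^{-1}(w)$. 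Hence it suffices to prove one direction, that $f[e]$ is an edge of $T$ whenever $e$ is an edge of $S$.

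So I would fix an edge $e = \{v_1, \dots, v_k\}$ of $S$ and set $r = x_s(e)$, the independent set obtained by deleting $e$ and all neighbors of its vertices from $s$. For every proper subset $A' \subsetneq e$ the set $r \cup A'$ is independent (a proper subset of an edge is independent, and deleting the neighbors prevents any new edge from appearing), so $r \cup A'$ is a vertex of $\Gamma_S(s)$, whereas $r \cup e$ is dependent. Walking from $r$ to $r \cup A'$ by adding one element at a time and applying the key lemma at each step gives $g(r \cup A') = g(r) \oplus f[A']$. I would then define $Z = g(r) \oplus f[e]$, so that for every $i$ the set $g(r \cup (e \setminus \{v_i\})) = Z \oplus \{f(v_i)\}$ is a vertex of $\Gamma_T(t)$.

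The heart of the argument is to show that $Z$ is not a vertex, i.e.\ not independent. Since $Z \oplus g(r) = f[e]$ is finite and $g(r)$ lies in $\Gamma_T(t)$, the set $Z$ has finite symmetric difference with $t$; so if $Z$ were independent it would equal $g(q)$ for some vertex $q$ of $\Gamma_S(s)$. But then, for each $i$, the adjacency of $Z$ and $Z \oplus \{f(v_i)\}$ together with the key lemma for $g^{-1}$ would force $q$ to differ from $r \cup (e \setminus \{v_i\})$ by exactly $\{v_i\}$, whence $q = r \cup e$, which is dependent and hence not a vertex, a contradiction. Therefore $Z$ is dependent and contains some edge $d$ of $T$. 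A short toggling argument then shows $f[e] \subseteq d$: if some $f(v_i) \notin d$, the edge $d$ would survive inside the independent set $Z \oplus \{f(v_i)\}$, which is impossible.

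The remaining step, upgrading $f[e] \subseteq d$ to $f[e] = d$, is where I expect the real subtlety, since a priori $d$ could be strictly larger than $f[e]$ (the constant $c$ from the previous lemma means $Z$ need not be the disjoint union $g(r) \sqcup f[e]$, so the extra elements of $d$ cannot be excluded by a direct computation). I would close this gap using the symmetry noted at the outset: applying the containment result just proved to the edge $d$ of $T$ and the induced map $f^{-1}$ yields an edge $e''$ of $S$ with $f^{-1}[d] \subseteq e''$. Chasing containments, $e = f^{-1}[f[e]] \subseteq f^{-1}[d] \subseteq e''$, so $e \subseteq e''$ with both edges of the simple hypergraph $S$; simplicity forces $e = e''$, hence $f^{-1}[d] \subseteq e$ and therefore $d \subseteq f[e]$. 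Combined with $f[e] \subseteq d$ this gives $f[e] = d$, an edge of $T$. The symmetric statement for $f^{-1}$ holds by the same argument, so $A$ is an edge of $S$ if and only if $f[A]$ is an edge of $T$, which completes the proof that $f$ is an isomorphism from $S$ to $T$.
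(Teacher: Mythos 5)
Your proof is correct, but it takes a genuinely different route from the paper's. The paper never works with edges directly: it shows that $f$ preserves independence of finite sets in both directions, and then invokes simplicity to conclude that a bijection preserving finite independent sets must be an isomorphism. Its key device for one direction is the algebraic choice $a' = a \setminus (f^{-1}[c] \oplus r)$, which makes $g(r \oplus a') = f[r]\oplus f[a']\oplus c$ collapse to $f[a] \cup (c \oplus f[r])$, so that independence of $f[a]$ falls out as independence of a subset; the constant $c$ from the earlier lemma is cancelled explicitly. You instead attack edges head-on: starting from $r = x_s(e)$ you show that the ``missing corner'' $Z = g(r)\oplus f[e]$ of the $e$-cube cannot be a vertex of $\Gamma_T(t)$ (since its preimage would have to be the dependent set $r \cup e$), extract an edge $d \subseteq Z$, pin down $f[e] \subseteq d$ by toggling, and then use the symmetry of the construction together with simplicity to force $f[e] = d$. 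Each step checks out: the intermediate sets $r \cup A''$ with $A'' \subsetneq e$ are independent by simplicity and the removal of neighbors, the identification of the preimage of $Z$ as $r\cup e$ correctly uses the key lemma and injectivity of $f$, and the final containment chase $e \subseteq f^{-1}[d] \subseteq e''$ legitimately invokes the fact that no edge of a simple hypergraph properly contains another. What your approach buys is that it never has to manipulate the constant $c$ at all and it produces the edge correspondence $f[e]=d$ explicitly; what the paper's approach buys is brevity, since ``independent maps to independent in both directions'' plus simplicity dispatches the edge structure in one sentence, at the cost of the somewhat opaque choice of $a'$.
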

\begin{proof}
We will show that for every finite independent subset $a$ of $S$, the set $f[a]$ is an independent subset of $T$.
Then the same argument with $S$ and $T$ switched and $f$ and $g$ replaced by their inverses will show that for every finite independent subset $b$ of $T$, the set $f^{-1}[b]$ is an independent subset of $S$.  Putting these facts together, we have an equivalence: for every finite subset $a$ of $S$, it is independent if and only if the subset $f[a]$ of $T$ is independent.
Since the hypergraphs $S$ and $T$ are simple (and their edges are finite) it is not difficult to see that any bijection $f$ with this property must be an isomorphism.

Let $a$ be a finite independent subset of $S$.
Since $S$ is locally finite, there is
some $r \in \Gamma_S(s)$ such that $r$ does not contain any elements of $a$ or their neighbors.  Then
 for every subset $a'$ of $a$, the set
$r \oplus a'$ is independent in $S$, so $r \oplus a'$ is in $\Gamma_S(s)$,
and therefore by a previous lemma,
it follows that the set
$f[r] \oplus f[a'] \oplus c$ is in $\Gamma_T(t)$,
where $c = f[s] \oplus t$, so
$f[r] \oplus f[a'] \oplus c$ is independent in $T$.
Now letting
$a' = a - (f^{-1}[c] \oplus r)$,
we have
\[  f[r] \oplus f[a'] \oplus c = f[r] \oplus (f[a] - (c \oplus f[r])) \oplus c = 
f[a] \cup (c \oplus f[r]), \]
so the set $f[a] \cup (c \oplus f[r])$ is in independent in $T$.  Since $f[a]$ is a subset of This set, $f[a]$ must also be independent in $T$, as desired.

\end{proof}

Now we prove part 2 of the theorem.
Since a previous lemma gives a constant subset $c$ of $T$ such that we have 
\[\forall r \in \Gamma_S(s), \quad g(r) = f[r] \oplus c,\] it suffices to show 
under the additional assumption that $S$ and $T$ have no isolated vertices that $c = \emptyset$.  Suppose toward a contradiction that $c \ne \emptyset$.
Define an isomorphism 
\[h : \Gamma_S(s) \to \Gamma_S(f^{-1}[t])\]
by composing $g$ with the isomorphism $\Gamma_T(t) \to \Gamma_S(f^{-1}[t])$ induced by $f^{-1}$.  In other words,
$h(r) = f^{-1}[g(r)]$.
Then for all $r$ in $\Gamma_S(s)$.
\[h(r) = f^{-1}[g(r)]
= f^{-1}[f[r] \oplus c] = r \oplus f^{-1}[c].\]
Defining $b = f^{-1}[c]$, we have
\[\forall r \in \Gamma_S(s), \quad h(r) = r \oplus b,\] 
and by our assumption that $c \ne \emptyset$ it follows that $b \ne \emptyset$.
Since $h$ is an isomorphism from $\Gamma_S(s)$ to $\Gamma_S(f^{-1}[t])$, it follows that
for every independent subset $r$ of $S$ that differs finitely from $s$, the subset $r \oplus b$ of $S$ is also independent.

Since $b$ is nonempty, we may take a vertex $v \in b$.  Since $S$ has no isolated vertices, we have $v \in e$ for some edge $e$ of $S$.
Since $S$ is locally finite, there is some $s' \in \Gamma_S(s)$ such that $s'$ does not contain any elements of
$e$ or their neighbors.
Define $r = s' \cup e \setminus b$.
Then $r$ is in $\Gamma_S(s)$ and it follows that 
$h(r) = r \oplus b$ is in $\Gamma_S(f^{-1}[t])$,
so the set $r \oplus b$ is independent in $S$.
But this is a contradiction, because $e$ is an edge of $S$ and $e \subseteq r \oplus b$.











This concludes the proof of the theorem.

Since the infinite Fibonacci cube $\Gamma_\infty$ is $\Gamma_G$ where $G$ is the one-way infinite path graph, and $G$ itself is asymmetric, we obtain the following:

\begin{corollary}
Every connected component of $\Gamma_\infty$
is asymmetric, and no two connected components of $\Gamma_\infty$ are isomorphic.
\end{corollary}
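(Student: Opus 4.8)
The plan is to deduce the corollary directly from part 2 of the theorem, applied in the special case $S = T = G$, where $G$ is the one-way infinite path graph with vertex set $\{1, 2, 3, \dots\}$ and edges $\{i, i+1\}$. First I would verify that $G$ meets the hypotheses: every edge has exactly two vertices and no edge contains another, so $G$ is simple; each vertex lies in at most two edges, so $G$ is locally finite; and every vertex lies in some edge, so $G$ has no isolated vertices. Thus both parts of the theorem apply to $G$, and part 2 in particular is available. It then remains only to exploit the rigidity of $G$ itself.

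The key observation is that $G$ is asymmetric. I would argue this by noting that the vertex $1$ is the unique vertex of degree one, so any automorphism of $G$ must fix $1$; it must then fix the unique neighbor $2$ of $1$, then the unique further neighbor $3$ of $2$, and so on by induction, forcing the identity. Hence the only isomorphism $G \to G$ is the identity map.

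With these two facts in hand, both halves of the corollary fall out of part 2. For asymmetry of a component, let $g$ be any automorphism of a connected component $\Gamma_G(s)$. By part 2 there is an isomorphism $f : G \to G$ with $g(r) = f[r]$ for every vertex $r$; since $G$ is asymmetric, $f$ is the identity, so $g(r) = r$ for all $r$ and $g$ is the identity automorphism. For the non-isomorphism of distinct components, suppose $g : \Gamma_G(s) \to \Gamma_G(t)$ is an isomorphism. Again part 2 yields $f : G \to G$ with $g(r) = f[r]$, and asymmetry of $G$ forces $f = \mathrm{id}$, so $g(r) = r$ for every $r \in \Gamma_G(s)$. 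Then the image $\Gamma_G(t)$ of $g$ equals $\Gamma_G(s)$ as a set of vertices, so the two components coincide; hence no two distinct components can be isomorphic.

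The substantive content of the corollary is carried entirely by the theorem, so there is no serious obstacle remaining. The only point requiring a little care is the asymmetry of $G$, but this is immediate from the degree argument above; in particular the argument relies on $G$ having no isolated vertices, so that part 2 (and not merely part 1) is available, and on the single endpoint of the half-line to break any potential symmetry.
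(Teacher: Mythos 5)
Your proposal is correct and follows exactly the paper's (very terse) argument: apply part 2 of the theorem with $S = T$ equal to the one-way infinite path graph and use its asymmetry, which you verify via the unique degree-one endpoint. You simply spell out the details the paper leaves implicit.
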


\end{document}